\newtheorem{theorem}{Theorem}
\theoremstyle{plain}
\newtheorem{lemma}{Lemma}
\theoremstyle{definition}
\begin{document}
\title[Chebyshev Upper Estimates for Generalized Primes]{Chebyshev Upper Estimates for Beurling's Generalized Prime Numbers}
\author[J. Vindas]{Jasson Vindas}
\address{Department of Mathematics, Ghent University, Krijgslaan 281 Gebouw S22, B 9000 Gent, Belgium}
\email{jvindas@cage.Ugent.be}
\thanks{The author gratefully acknowledges support by a Postdoctoral Fellowship of the Research
Foundation--Flanders (FWO, Belgium)}

\subjclass[2000]{Primary 11N80. Secondary 11N05, 11M41}
\keywords{Chebyshev upper estimates; Beurling generalized primes}

\begin{abstract}
Let $N$ be the counting function of a Beurling generalized number system  and let $\pi$ be the counting function of its primes. We show that the $L^{1}$-condition 
$$
\int_{1}^{\infty}\left|\frac{N(x)-ax}{x}\right|\frac{\mathrm{d}x}{x}<\infty
$$
and the asymptotic behavior
$$N(x)=ax+O\left(\frac{x}{\log x}\right)\: ,$$
for some $a>0$, suffice for a Chebyshev upper estimate
$$
\frac{\pi(x)\log x}{x}\leq B<\infty\: .
$$
\end{abstract}
\maketitle
\section{Introduction}
 
Let $P=\left\{p_k\right\}_{k=1}^{\infty}$ be a set of Beurling generalized primes, namely, a non-decreasing sequence of real numbers $
1<p_1\leq p_{2}\leq \cdots\leq p_{k} \to\infty.$
The sequence $\left\{n_{k}\right\}_{k=1}^{\infty}$ denotes its associated set of generalized integers \cite{bateman-diamond,beurling}. Consider the counting functions of generalized integers and primes
\begin{equation*}
N(x)=N_{P}(x)=\sum_{n_{k}<x}1 \ \ \mbox{ and }\ \ 
\pi(x)=\pi_{P}(x)=\sum_{p_{k}<x}1\: .
\end{equation*}
Beurling's problem consists in finding mild conditions over $N$ that ensure a certain asymptotic behavior for $\pi$. This problem has been extensively investigated in connection with the prime number theorem (PNT), i.e., 
\begin{equation}
\label{cueeq1}
\pi(x)\sim \frac{x}{\log x}\:, \ \ \ x\to\infty\: ,
\end{equation}
and Chebyshev two-sided estimates, that is,
\begin{equation}
\label{cueeq2}
0<\liminf_{x\to\infty} \frac{\pi(x)\log x}{x} \ \ \ \mbox{and} \ \ \ \limsup_{x\to\infty} \frac{\pi(x)\log x}{x}<\infty\:.
\end{equation}
On the other hand, there are no mild hypotheses in the literature for Chebyshev upper estimates,
\begin{equation}
\label{cueeq3}
\limsup_{x\to\infty} \frac{\pi(x)\log x}{x}<\infty\:.
\end{equation}
The purpose of this article is to study asymptotic requirements over $N$ that imply the Chebyshev upper estimate (\ref{cueeq3}).

Beurling \cite{beurling} proved that 
\begin{equation}
\label{cueeq4}
N(x)=ax+O\left(\frac{x}{\log^{\gamma}x}\right)\: , \ \ \ x\to\infty\ \ \  (a>0)\: ,
\end{equation}
where $\gamma>3/2$, suffices for the PNT (\ref{cueeq1}) to hold. See \cite{beurling,kahane1,vindasGPNT} for more general PNT. 
Beurling's condition is sharp, because when 
$\gamma=3/2$ there are generalized number systems for which the PNT fails \cite{beurling,diamond2}. For $\gamma<1$, not even Chebyshev estimates need to hold, as follows from an example of Hall \cite{hall} (see also \cite{balanzario2000}). Diamond has shown \cite{diamond3} that (\ref{cueeq4}) with $\gamma>1$ is enough to obtain Chebyshev two-sided estimates (\ref{cueeq2}). Furthermore, he 
conjectured \cite{diamond4} that the weaker hypothesis
\begin{equation}
\label{cueeq5}
\int_{1}^{\infty}\left|\frac{N(x)-ax}{x}\right|\frac{\mathrm{d}x}{x}<\infty\: ,  \ \ \ \mbox{with } a>0\: ,
\end{equation}
would be enough for (\ref{cueeq2}). His conjecture was shown to be false by Kahane \cite{kahane2}. Nevertheless, the author has recently shown \cite{vindaschebyshevI} that if one adds to (\ref{cueeq5}) the condition 
\begin{equation}
\label{cueeq6}
N(x)=ax+o\left(\frac{x}{\log x}\right)\: , \ \ \ x\to\infty\: ,
\end{equation}
then (\ref{cueeq2}) is fulfilled, 
extending thus earlier results from \cite{diamond3,zhang}.

It is natural to replace the little $o$ symbol in (\ref{cueeq6}) by an $O$ growth estimate and investigate the effect of this new condition on the asymptotic distribution of the generalized primes. It turns out that one gets a Chebyshev upper estimate in this case. Our main goal is to give a proof of the following theorem.

\begin{theorem}
\label{cueth1} Diamond's $L^{1}$-condition $(\ref{cueeq5})$ and the asymptotic behavior
\begin{equation}
\label{cueeq7}
N(x)=ax+O\left(\frac{x}{\log x}\right)\:, \ \ \ x\to\infty\:,
\end{equation}
suffice for the Chebyshev upper estimate $(\ref{cueeq3})$.
\end{theorem}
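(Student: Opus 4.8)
The plan is to reduce the Chebyshev upper estimate $(\ref{cueeq3})$ to the single bound $\psi(x)=O(x)$ for the Beurling--Chebyshev function $\psi(x)=\sum_{p_{k}^{m}<x}\log p_{k}$, and then to extract that bound from the asymptotics of one Mellin--Stieltjes convolution. First I would record the standard reduction: since $\pi(x)\le\Pi(x)$ and $\theta(x):=\sum_{p_{k}<x}\log p_{k}\le\psi(x)$, and since partial summation gives $\pi(x)=\theta(x)/\log x+\int_{p_{1}}^{x}\theta(t)(t\log^{2}t)^{-1}\mathrm{d}t$, a bound $\psi(x)=O(x)$ forces $\theta(x)=O(x)$ and hence $\pi(x)=O(x/\log x)$, which is $(\ref{cueeq3})$. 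So it suffices to prove $\psi(x)=O(x)$.

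Next I would exploit the identity $-\zeta'(s)=\zeta(s)\cdot(-\zeta'(s)/\zeta(s))$, which on the $x$-side reads
\[
T(x):=\int_{1^{-}}^{x}\log t\,\mathrm{d}N(t)=\int_{1^{-}}^{x}N(x/t)\,\mathrm{d}\psi(t).
\]
Writing $N(t)=at+E(t)$ and integrating by parts in the first expression gives $T(x)=N(x)\log x-\int_{1}^{x}N(t)t^{-1}\mathrm{d}t=ax\log x+O(x)$, where $E(x)\log x=O(x)$ uses $(\ref{cueeq7})$ and $\int_{1}^{x}|E(t)|t^{-1}\mathrm{d}t\le x\int_{1}^{\infty}|E(t)|t^{-2}\mathrm{d}t=O(x)$ uses Diamond's $L^{1}$-condition $(\ref{cueeq5})$. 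Inserting $N(x/t)=ax/t+E(x/t)$ into the second expression splits $T$ as
\[
T(x)=ax\,M(x)+S(x),\qquad M(x)=\int_{1^{-}}^{x}\frac{\mathrm{d}\psi(t)}{t},\quad S(x)=\int_{1^{-}}^{x}E(x/t)\,\mathrm{d}\psi(t).
\]
Hence $M(x)=\log x+O(1)$ exactly when the remainder satisfies $S(x)=O(x)$. Granting this, the proof finishes by monotonicity: since $\mathrm{d}\psi\ge0$ one has $M(2x)-M(x)=\int_{x}^{2x}t^{-1}\mathrm{d}\psi(t)\ge(\psi(2x)-\psi(x))/(2x)$, while $M(2x)-M(x)=O(1)$; thus $\psi(2x)-\psi(x)=O(x)$ uniformly, and telescoping over a dyadic decomposition downward yields $\psi(x)=O(x)$.

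The whole difficulty is therefore concentrated in the remainder estimate $S(x)=O(x)$, and this is the step I expect to be the main obstacle. A crude bound fails decisively: although $E$ is small, the measure $\mathrm{d}\psi$ and the variation of $\mathrm{d}N$ are large (of order $x\log x$), so $\int_{1^{-}}^{x}|E(x/t)|\,\mathrm{d}\psi(t)=\sum_{q<x}\Lambda(q)|E(x/q)|$, estimated with the a~priori bound $\psi(x)\le T(x)=O(x\log x)$, returns only $O(x\log x)$; taking absolute values destroys the cancellation one must use. One is thus forced onto the transform side, and here the two hypotheses do precisely complementary jobs. The $L^{1}$-condition $(\ref{cueeq5})$ makes $H(s):=\zeta(s)-a/(s-1)$ extend continuously to $\Re s\ge1$ — writing $g(w)=E(e^{w})e^{-w}$ one has $g\in L^{1}(0,\infty)$ and $H(1+it)=a+(1+it)\hat g(t)$ — while $(\ref{cueeq7})$ upgrades this to $g(w)=O(1/w)$, the decay needed to make the boundary distribution of $\zeta'(s)/\zeta(s)$ on $\Re s=1$ a sufficiently tame object (a pseudomeasure). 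Since $\widehat{\mathrm{d}S}=H(s)\,(-\zeta'(s)/\zeta(s))$ has at $s=1$ a simple pole of residue $H(1)$, the strategy is to feed this boundary information into a complex Tauberian theorem. Because $S$ is not monotone I would not bound it directly; instead I would apply the Tauberian theorem to the nondecreasing $\psi$, whose transform is $-\zeta'/\zeta=(s-1)^{-1}+(\text{tame boundary term})$, to obtain $M(x)=\log x+O(1)$, equivalently $S(x)=O(x)$.

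The genuinely hard part, and the place where $(\ref{cueeq7})$ is indispensable beyond $(\ref{cueeq5})$ — recall that $(\ref{cueeq5})$ alone fails by Kahane's example — is establishing this boundary behaviour of $\zeta'/\zeta$ near $\Re s=1$: one must rule out, or tightly control, zeros of $\zeta$ on the line and verify that $-\zeta'(s)/\zeta(s)-(s-1)^{-1}$ has boundary values of pseudomeasure type, so that the Tauberian machinery delivers the \emph{sharp} $O(x)$ rather than a mere $o(x\log x)$ (which would be useless for the dyadic differencing above, since there the $o(1)$'s at $x$ and $2x$ need not cancel). I would carry out this analysis through the factorization $\log\zeta(s)=\log a-\log(s-1)+\log\bigl(1+(s-1)H(s)/a\bigr)$, using the $O(1/w)$ decay of $g$ to control the last term and its derivative up to the boundary line.
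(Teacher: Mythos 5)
Your elementary scaffolding is correct, and it is a legitimate alternative to the paper's own reduction: the convolution identity $T(x)=ax\,M(x)+S(x)$, the estimate $T(x)=ax\log x+O(x)$ from (\ref{cueeq5}) and (\ref{cueeq7}), the equivalence $S(x)=O(x)\Leftrightarrow M(x)=\log x+O(1)$, and the dyadic differencing that turns the Mertens-type bound into $\psi(x)=O(x)$ are all sound (the paper instead works with $T(u)=e^{-u}\psi(e^{u})$ and converts bounded convolution averages, Lemma \ref{cuel1}, into $T(h)=O(1)$ using the same positivity of $\mathrm{d}\psi$). The genuine gap is that your argument stops exactly where the theorem begins: the remainder bound $S(x)=O(x)$ is delegated to an unnamed ``complex Tauberian theorem'' fed with boundary values ``of pseudomeasure type,'' and neither the Tauberian statement nor the boundary behaviour is actually proved. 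There is no off-the-shelf theorem to cite here; producing precisely such a local Tauberian statement \emph{is} the content of the paper.

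Two steps in your sketch would fail as written. First, the division step. With $G(s)=\zeta(s)-a/(s-1)$, differentiating your logarithmic factorization merely reproduces the quotient
\[
-\frac{\zeta'(s)}{\zeta(s)}-\frac{1}{s-1}=-\,\frac{G(s)+(s-1)G'(s)}{(s-1)\zeta(s)}\:,
\]
so nothing is gained by passing through $\log\zeta$. Hypothesis (\ref{cueeq7}) controls the numerator only distributionally: writing $E_{1}(u)=e^{-u}N(e^{u})-aH(u)$ and $E_{2}(u)=uE_{1}(u)\in L^{\infty}(\mathbb{R})$, its singular part is (a derivative of) the Fourier transform of the bounded function $E_{2}$; and (\ref{cueeq5}) makes the denominator continuous and non-vanishing near $s=1$. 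But a quotient of such an object by a \emph{merely continuous} non-vanishing function need not retain this character --- Fourier transforms of bounded functions are not stable under division by continuous functions. This is exactly why the paper invokes the Wiener division theorem, and why Diamond's condition is used a second time: $\eta(t)\,it\,\zeta(1+it)$ is exhibited as the Fourier transform of the $L^{1}$-function $\chi_{1}\ast E_{1}+\chi_{2}$, Wiener's lemma then yields $f\in L^{1}(\mathbb{R})$ with $\hat{f}=\phi/(\eta(t)\,it\,\zeta(1+it))$, and the bound (\ref{cueeq10}) falls out of pairing $E_{2}\in L^{\infty}$ against $(\hat{\eta})'\ast f\in L^{1}$. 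Second, your stated need to ``rule out, or tightly control, zeros of $\zeta$ on the line'' is both impossible and unnecessary: under (\ref{cueeq5}) and (\ref{cueeq7}) one cannot exclude zeros of $\zeta(1+it)$ for $t\neq0$ (Beurling's $\gamma=3/2$ examples satisfy both hypotheses, yet the PNT fails for them), so any Tauberian argument requiring information on the whole line $\Re e\:s=1$ is doomed. What saves the theorem is that the Chebyshev upper bound needs only \emph{local} data: since $(s-1)\zeta(s)\to a>0$, one has $it\zeta(1+it)\neq0$ on some interval $(-3c,3c)$, and the paper's lemma is engineered --- test functions $\phi\in\mathcal{D}(-c,c)$, then monotonicity of $\psi$ --- so that this neighbourhood of $s=1$ suffices. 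Until you prove a Tauberian statement of exactly this local, Wiener-algebra type, the central claim $S(x)=O(x)$ remains unestablished.
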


\section{Notation}
We will give an analytic proof of Theorem \ref{cueth1}. Our technique follows distributional ideas already used in \cite{vindasGPNT,vindaschebyshevI,vindas-estradaPNT}. It employs the Wiener division theorem \cite[Chap. 2]{korevaar} and the operational calculus for the Laplace transform of Schwartz distributions \cite{bremermann,vladimirov}. The Schwartz spaces of test functions and distributions are denoted as $\mathcal{D}(\mathbb{R})$, $\mathcal{S}(\mathbb{R})$, $\mathcal{D}'(\mathbb{R})$ and $\mathcal{S}'(\mathbb{R})$, see \cite{estrada-kanwal,schwartz,vladimirov} for their properties. If $f\in\mathcal{S}'(\mathbb{R})$ has support in $[0,\infty)$, its Laplace transform is well defined as
$$
\mathcal{L}\left\{f;s\right\}=\left\langle f(u),e^{-su}\right\rangle\: , \ \ \  \Re e\:s>0\: ,
$$
and the Fourier transform $\hat{f}$ is the distributional boundary value \cite{bremermann} of $\mathcal{L}\left\{f;s\right\}$ on $\Re e\:s=0$. We use the notation $H$ for the Heaviside function, it is simply the characteristic function of $(0,\infty)$.

Observe that (\ref{cueeq3}) is equivalent to 
\begin{equation}
\label{cueeq8}
\limsup_{x\to\infty}\frac{\psi(x)}{x}<\infty\:,
\end{equation}
where $\psi$ is the Chebyshev function
$$
\psi(x)=\psi_{P}(x)=\sum_{n_{k}<x}\Lambda(n_{k})\:,
$$
as follows from \cite[Lem. 2E]{bateman-diamond}.

\section{Proof of Theorem \ref{cueth1}}
Assume (\ref{cueeq5}) and (\ref{cueeq7}). Set $T(u)=e^{-u}\psi(e^{u})$. We must show (\ref{cueeq8}), that is,
\begin{equation}
\label{cueeq9}
\limsup_{u\to\infty}T(u)<\infty\: . 
\end{equation}
The crude inequality $T(u)\leq ue^{-u}N(e^{u})=O(u)$ implies that $T\in\mathcal{S}'(\mathbb{R})$. The proof of (\ref{cueeq9}) depends upon estimates on convolution averages of $T$:

\begin{lemma}
\label{cuel1} There exists $c>0$ such that
\begin{equation}
\label{cueeq10}
\int_{-\infty}^{\infty}T(u)\hat{\phi}(u-h)\mathrm{d}u=O(1)\:,
\end{equation}
whenever $\phi\in \mathcal{D}(-c,c)$.
\end{lemma}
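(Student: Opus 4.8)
The plan is to pass to the Laplace/Fourier transform side, where the hypotheses on $N$ become boundary regularity statements for $\zeta$, and then to exploit that $\phi$ is band--limited, so that only the local behaviour of $\hat{T}$ near the origin is relevant. First I would record the transform identity: writing $\zeta(w)=\sum_{k}n_{k}^{-w}$ for $\Re e\,w>1$, summation by parts gives $\mathcal{L}\{T;s\}=\frac{1}{s+1}\bigl(-\frac{\zeta'(s+1)}{\zeta(s+1)}\bigr)$ for $\Re e\,s>0$, and together with the crude bound $T(u)=O(u)$ already noted this exhibits $\hat{T}$ as the distributional boundary value of a function holomorphic in $\Re e\,s>0$. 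By the distributional Parseval relation, $\int_{-\infty}^{\infty}T(u)\hat{\phi}(u-h)\,\mathrm{d}u=\langle\hat{T}(t),\phi(t)e^{iht}\rangle$ up to a fixed normalisation, and since $\operatorname{supp}\phi\subset(-c,c)$ the claim reduces to bounding this pairing, uniformly in $h$, in terms of the behaviour of $\hat{T}$ on $(-c,c)$ alone.

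Next I would use Diamond's $L^{1}$--condition $(\ref{cueeq5})$ to control $\zeta$ near the line. Put $R(u)=e^{-u}N(e^{u})-a$, so that $(\ref{cueeq5})$ is exactly $R\in L^{1}[0,\infty)$ and $\mathcal{R}(s):=\mathcal{L}\{R;s\}$ is continuous on $\Re e\,s\ge 0$. A direct computation then gives the factorisation $\zeta(1+s)=\frac{1+s}{s}\,(a+s\mathcal{R}(s))$, so that $\zeta(1+\,\cdot\,)$ has a simple pole of residue $a$ at $s=0$ and continuous, nonvanishing boundary values on a punctured neighbourhood of the origin. I fix $c>0$ with $\zeta(1+it)\neq 0$ for $0<|t|\le c$. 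Splitting off the polar part, $\hat{T}=\frac{1}{it}+(\text{remainder})$ on $(-c,c)$; the contribution of $\frac{1}{it}$ is the smoothed Heaviside term $\pi\phi(0)-i\,\mathrm{pv}\!\int\phi(t)e^{iht}t^{-1}\,\mathrm{d}t$, which stays bounded as $h\to\infty$ and is therefore harmless.

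It remains to bound the pairing of the remainder, and this is where I expect the real work. Since $\zeta(1+it)$ is continuous and nonvanishing on $0<|t|\le c$, the Wiener division theorem lets me invert $\zeta$ locally --- realising $1/\zeta(1+it)$ as a genuine local $L^{1}$--transform --- and thus rewrite the remainder's convolution average in terms of averages of the integer--side transform $-\zeta'(1+s)$, that is, of the measure attached to $M(x)=\sum_{n_{k}<x}\log n_{k}$, which is governed directly by $N$. After this reduction the estimate collapses to a smoothed average of $u\,R(u)=\log x\,(N(x)/x-a)=\frac{N(x)-ax}{x/\log x}$, whose supremum is precisely $\sup_{x}\frac{|N(x)-ax|}{x/\log x}$.

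The main obstacle is thus the passage from $(\ref{cueeq5})$ to the uniform bound in $h$: condition $(\ref{cueeq5})$ alone only yields continuity of $\mathcal{R}$, hence of $\zeta$, which --- consistently with Kahane's counterexample \cite{kahane2} --- does not suffice, whereas $(\ref{cueeq7})$ supplies exactly the missing boundedness $uR(u)=O(1)$ that makes the reduced averages $O(1)$. (Under the stronger $(\ref{cueeq6})$ one has $uR(u)=o(1)$, the same averages converge, and one recovers the two--sided estimate of \cite{vindaschebyshevI}.) I would therefore spend the bulk of the proof making the Wiener--division step rigorous within the operational calculus for the Laplace transform and verifying that the resulting averages are controlled, locally uniformly in $h$, by $\sup_{x}|N(x)-ax|/(x/\log x)$.
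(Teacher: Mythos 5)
Your outline follows the same route as the paper: reduce via Parseval to bounding $\langle \hat{T}(t), e^{iht}\phi(t)\rangle$, split off the polar (Heaviside) contribution, and control the rest by Wiener division together with the $L^\infty$ bound on $uR(u)$. However, the pivotal step, as you state it, would fail. You invoke the Wiener division theorem for the denominator $\zeta(1+it)$ on the strength of ``continuity and nonvanishing on $0<|t|\le c$.'' Two problems. First, Wiener's division theorem is a statement about the Wiener algebra $A(\mathbb{R})=\mathcal{F}(L^{1})$: the denominator must be (after localization) the Fourier transform of an $L^{1}$ function and nonvanishing on the compact support of the numerator; continuity plus nonvanishing is not a sufficient hypothesis. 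Second, and worse, $\mathrm{supp}\,\phi$ contains $t=0$, where $\zeta(1+it)$ has a pole, so $\zeta(1+it)$ cannot serve as the denominator at all --- your nonvanishing statement deliberately excludes the origin, but the division theorem needs nonvanishing \emph{on all of} $\mathrm{supp}\,\phi$. The object to invert is $it\,\zeta(1+it)$, equivalently $(s-1)\zeta(s)$, which by your own factorization equals $(1+it)\bigl(a+it\hat{R}(t)\bigr)$, hence is continuous with value $a\neq 0$ at $t=0$ and nonvanishing on some $(-3c,3c)$; moreover, multiplied by a cutoff $\eta\in\mathcal{D}(-3c,3c)$ equal to $1$ on $(-2c,2c)$, it genuinely lies in $A(\mathbb{R})$: the paper exhibits $\eta(t)\,it\,\zeta(1+it)$ as the Fourier transform of $\chi_{1}\ast E_{1}+\chi_{2}$, with $\hat{\chi}_{1}(t)=it(1+it)\eta(t)$ and $\hat{\chi}_{2}(t)=a(1+it)\eta(t)$. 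This is exactly where the $L^{1}$-condition (\ref{cueeq5}) is used \emph{constructively}, not merely for boundary continuity.

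The reduction you leave as ``the real work'' also needs an explicit identity; gesturing at averages of the measure attached to $\sum_{n_k<x}\log n_k$ does not by itself isolate the one term that requires hypothesis (\ref{cueeq7}). The paper's identity (\ref{cueeq11}) writes $-\zeta'(s)/(s\zeta(s))$ as $\mathcal{L}\{E_{2}';s-1\}/((s-1)\zeta(s))$ plus terms whose boundary values form a continuous function near the origin (contributing $o(1)$ by Riemann--Lebesgue) plus $1/(s-1)$, where $E_{2}(u)=uE_{1}(u)=uR(u)$. Note that $E_{2}'$ is only a distribution; after Wiener division produces $f\in L^{1}(\mathbb{R})$ with $\hat{f}(t)=\phi(t)/\bigl(\eta(t)\,it\,\zeta(1+it)\bigr)$, the pairing becomes $(E_{2}\ast(\hat{\eta})'\ast f)(h)$ --- the derivative has to be moved off $E_{2}'$ onto the smooth kernel --- and this is $O(1)$ \emph{uniformly} in $h$ (your ``locally uniformly in $h$'' is vacuous for the purpose of (\ref{cueeq10})) precisely because $E_{2}\in L^{\infty}(\mathbb{R})$ and $(\hat{\eta})'\ast f\in L^{1}(\mathbb{R})$. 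With these repairs, all of which use material you already have in hand (the factorization of $\zeta(1+s)$ and $R\in L^{1}$), your outline coincides with the paper's proof.
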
 
Indeed, suppose that Lemma \ref{cuel1} has been already established. Choose then in (\ref{cueeq10}) a test function $\phi\in\mathcal{D}(-c,c)$ such that $\hat{\phi}$ is non-negative.
Since $\psi(e^{u})$ is non-decreasing, we have $e^{-u}T(h)\leq T(u+h)$ whenever $u$ and $h$ are positive. Setting $C=\int_{0}^{\infty}e^{-u}\hat{\phi}(u)\mathrm{d}u>0,$ we obtain that
\begin{equation*}T(h)\leq \frac{1}{C}\int_{0}^{\infty}T(u+h)\hat{\phi}(u)\mathrm{d}u=O(1)\:,
\end{equation*}
and Theorem \ref{cueth1} follows at once. It remains to prove the lemma.

\begin{proof}[Proof of Lemma \ref{cuel1}] Set $E_{1}(u):=e^{-u}N(e^{u})-aH(u)$ and $E_{2}(u)=uE_{1}(u)$. The assumptions (\ref{cueeq5}) and (\ref{cueeq7}) take the form $E_{1}\in L^{1}(\mathbb{R})$ and $E_{2}\in L^{\infty}(\mathbb{R})$. Consider
$$
G(s)=\zeta(s)-\frac{a}{s-1}= s\mathcal{L}\left\{E_{1};s-1\right\}+a \: . 
$$
Taking $\Re e\:s\to1^{+}$, in the distributional sense, we obtain $G(1+it)=(1+it)\hat{E}_{1}(t)+a$. Since $E_1\in L^{1}(\mathbb{R})$, $\hat{E}_{1}$ is continuous; therefore $G(s)$ extends to a continuous function on $\Re e\:s=1$. Consequently, $(s-1)\zeta(s)$ is continuous on $\Re e\:s=1$ and there exists $c>0$ such that $it\zeta(1+it)\neq 0$ for all $t\in(-3c,3c)$. Next, we study the boundary values, on the line segment $1+i(-c,c)$, of 
$$
\mathcal{L}\left\{T(u);s-1\right\}=\mathcal{L}\left\{\psi(e^{u});s\right\}=-\frac{\zeta'(s)}{s\zeta(s)}\: .
$$
A quick calculation shows that
\begin{equation}
\label{cueeq11}
-\frac{\zeta'(s)}{s\zeta(s)}=\frac{\mathcal{L}\left\{E'_{2};s-1\right\}}{(s-1)\zeta(s)}-\frac{(2s-1)\mathcal{L}\left\{E_1;s-1\right\}+a}{s(s-1)\zeta(s)}-\frac{1}{s}+\frac{1}{s-1}\: ,
\end{equation}
Consider the boundary distributions
$$
g_{1}(t)=\lim _{\sigma\to1^{+}}\frac{\mathcal{L}\left\{E'_{2};\sigma-1+it\right\}}{(\sigma-1+it)\zeta(\sigma+it)} \ \ \ \mbox{in }\mathcal{S}'(\mathbb{R})\: , 
$$ 
and 
$$
g_{2}(t)=-\lim _{\sigma\to1^{+}}\left(\frac{(2\sigma-1+2it)\mathcal{L}\left\{E_{1};\sigma-1+it\right\}+a}{(\sigma+it)(\sigma-1+it)\zeta(\sigma+it)}+\frac{1}{\sigma+it}\right) \ \ \ \mbox{in }\mathcal{S}'(\mathbb{R})\: .
$$
Taking boundary values in (\ref{cueeq11}),
we have
$
\hat{T}(t)=g_{1}(t)+ g_{2}(t)+\hat{H}(t),
$
where $H$ is the Heaviside function. Fix $\phi\in\mathcal{D}(-c,c)$. Notice that $g_{2}$ is actually a continuous function on $(-3c,3c)$, thus,
\begin{align*}
\int_{-\infty}^{\infty}T(u)\hat{\phi}(u-h)\mathrm{d}u&=\left\langle g_{1}(t),e^{iht}\phi(t)\right\rangle+\int_{-c}^{c}e^{iht}g_{2}(t)\phi(t)\mathrm{d}t +\int_{-h}^{\infty}\hat{\phi}(u)\mathrm{d}u
\\
&
=\left\langle g_{1}(t),e^{iht}\phi(t)\right\rangle+o(1)+O(1)\: .
\end{align*}
Our task is then to demonstrate that
$
\left\langle g_{1}(t),e^{iht}\phi(t)\right\rangle=O(1).
$
Let $M\in\mathcal{S}'(\mathbb{R})$ be the distribution supported in the interval $[0,\infty)$ that satisfies $\mathcal{L}\left\{M;s-1\right\}=((s-1)\zeta(s))^{-1}$. Notice that $g_{1}=\widehat{( E_{2}'\ast M)}$. Fix an even function $\eta\in \mathcal{D}(-3c,3c)$ such that $\eta(t)=1$ for all $t\in(-2c,2c)$. Then, $\eta(t)it\zeta(1+it)\neq0$ for all $t\in(-2c,2c)$; moreover, it is the Fourier transform of the $L^{1}$-function $\chi_{1}\ast E_{1}+\chi_{2}$, where $\hat{\chi}_{1}(t)=it(1+it)\eta(t)$ and $\hat{\chi}_{2}(t)=a(1+it)\eta(t)$. We can therefore apply the Wiener division theorem \cite[p. 88]{korevaar} to $\eta(t)it\zeta(1+it)$ and $\phi(t)$. So we find $f\in L^{1}(\mathbb{R})$ such that

$$
\hat{f}(t)=\frac{\phi(t)}{\eta(t) it \zeta(1+it)}\: .
$$
Hence,
$$\left\langle g_{1}(t),e^{iht}\phi(t)\right\rangle=\left\langle (E_{2}'\ast M)(u),\hat{\phi}(u-h)\right\rangle=(E_{2}\ast (\hat{\eta})'\ast f)(h)=O(1)\: , $$
because $E_{2}\in L^{\infty}(\mathbb{R})$ and $(\hat{\eta})'\ast f\in L^{1}(\mathbb{R})$, whence (\ref{cueeq10}) follows. 

\end{proof}

\end{document}